\documentclass{article}

\usepackage{amssymb}
\usepackage{amsmath}
\usepackage{amsthm}


\newtheorem{theorem}{Theorem}
\newtheorem{definition}[theorem]{Definition}
\newtheorem{proposition}[theorem]{Proposition}
\newtheorem{remark}[theorem]{Remark}


\begin{document}

\title{Necessary conditions for linear noncooperative
$N$-player delta differential games on time
scales\thanks{Partially presented at the \emph{Fifth Symposium on
Nonlinear Analysis} (SNA 2007), Toru\'{n}, Poland, September
10-14, 2007.}}

\author{Nat\'{a}lia Martins \and Delfim F. M. Torres}

\date{Department of Mathematics\\
University of Aveiro\\
3810-193 Aveiro, Portugal\\[0.3cm]
\texttt{\{natalia,delfim\}@ua.pt}}

\maketitle


\begin{abstract}
We present necessary conditions for linear noncooperative
$N$-player delta dynamic games on a generic time scale.
Necessary conditions for an open-loop Nash-equilibrium and for a
memoryless perfect state Nash-equilibrium are proved.
\end{abstract}


\smallskip

\noindent \textbf{Mathematics Subject Classification 2000:} 49N70,
91A10, 39A10.

\smallskip


\smallskip

\noindent \textbf{Keywords:} time scales, delta differential
games, calculus of variations on time scales.


\section{Introduction}

In 1988 Stephan Hilger developed in his PhD thesis \cite{Hilger2}
the theory of \emph{time scales}. The set $\mathbb{R}$ of real
numbers and the set $\mathbb{Z}$ of integers     are (trivial)
examples of time scales. When a result is proved in a general time
scale $\mathbb{T}$, one unifies both continuous and discrete
analysis. Moreover, since there are infinitely many other time
scales, a much more general result is proved. For this reason, one
can say that the two main features of the theory of time scales
are \emph{unification} and \emph{extension}.

\emph{Differential game theory} is a relatively new area of
Mathematics, initiated in the fifties of the XX century with the
works of Rufus Isaacs, that founds many applications in different
fields such as economics, politics, and artificial intelligence.
The theory has been studied in the context of classical
Mathematics, in discrete or continuous time \cite{livroJank,Leitmann}.
We trust that it is also possible (with some advantages) to present
(delta) differential games in the generic context of time scales.
To the best of the authors knowledge, this paper represents the
first attempt to provide a delta differential game theory on a
generic time scale.

The paper is organized as follows. In Section~\ref{sec:2} we
review some basic definitions and results from the calculus
on time scales. In Section~\ref{sec:3},
necessary conditions for a weak local
minimizer of a Lagrange problem on time scales
(Theorem~\ref{PMP-time-scales}) are presented, while
Section~\ref{sec:4} recalls a result that guarantees the
uniqueness of the forward solution for a special initial valued
problem on time scales (Theorem~\ref{Zbigniew+Ewa}). In
Section~\ref{sec:5} we introduce the definition of a $N$-player
delta differential game, the notion of Nash-equilibrium, and two
types of information structure in a game: open-loop and memoryless
perfect state. The main results of the paper appear then in
Section~\ref{sec:6}, where necessary conditions for a linear
open-loop Nash-equilibrium and for a linear memoryless perfect
state Nash-equilibrium are proved.


\section{Calculus on time scales}
\label{sec:2}

We briefly present here the necessary concepts and results from
the theory of time scales (\textrm{cf.}
\cite{Bohner-Peterson1,Bohner-Peterson2,Hilger} and references
therein). As usual, $\mathbb{R}$, $\mathbb{Z}$  and $\mathbb{N}$
denote, respectively, the set of real, integer, and natural
numbers.

A \emph{time scale} $\mathbb{T}$ is an arbitrary nonempty closed
subset of $\mathbb{R}$. Thus, $\mathbb{R}$, $\mathbb{Z}$ and
$\mathbb{N}$ are examples of times scales. Other examples of times
scales can be $h\mathbb{Z}$, for some $h>0$, $[1,4] \bigcup
\mathbb{N}$, or the Cantor set. We assume that a time scale
$\mathbb{T}$ has the topology that it inherits from the real
numbers with the standard topology.

The \emph{forward jump operator}
$\sigma:\mathbb{T}\rightarrow\mathbb{T}$ is defined by
$$\sigma(t)=\inf{\{s\in\mathbb{T}:s>t}\}$$
if $t\neq \sup \mathbb{T}$ and $\sigma(\sup \mathbb{T})=\sup
\mathbb{T}$. The \emph{backward jump operator}
$\rho:\mathbb{T}\rightarrow\mathbb{T}$ is defined by
$$\rho(t)=\sup{\{s\in\mathbb{T}:s<t}\}$$
if $t\neq \inf \mathbb{T}$ and $\rho(\inf \mathbb{T})=\inf
\mathbb{T}$.

A point $t\in\mathbb{T}$ is called \emph{right-dense},
\emph{right-scattered}, \emph{left-dense} or \emph{left-scattered}
if $\sigma(t)=t$, $\sigma(t)>t$, $\rho(t)=t$ or $\rho(t)<t$,
respectively.

The \emph{graininess function}
$\mu:\mathbb{T}\rightarrow[0,\infty)$ is defined by
$$\mu(t)=\sigma(t)-t,\mbox{ for all $t\in\mathbb{T}$.}$$
For a given instant $t$, $\mu(t)$ measures the distance of $t$ to
its right neighbor.

It is clear that when $\mathbb{T}=\mathbb{R}$, then for any $t \in
\mathbb{R}$, $\sigma(t)=t=\rho(t)$ and $\mu=0$. When
$\mathbb{T}=\mathbb{Z}$, for any $t \in \mathbb{Z}$,
$\sigma(t)=t+1$, $\rho(t)=t-1$, and $\mu=1$.

In order to introduce the definition of \emph{delta derivative},
we define the set
$$\mathbb{T}^k:=\{t \in \mathbb{T}: t  \ \ \mbox{is nonmaximal or left-dense} \} \, .$$
Thus, $\mathbb{T}^k$ is obtained from $\mathbb{T}$ by removing its
maximal point if this point exists and is left-scattered.

We say that a function $f:\mathbb{T}\rightarrow\mathbb{R}$ is
\emph{delta differentiable} at $t\in\mathbb{T}^k$ if there is a
number $f^{\Delta}(t)$ such that for all $\varepsilon>0$ there
exists a neighborhood $U$ of $t$ (\textrm{i.e.},
$U=]t-\delta,t+\delta[\cap\mathbb{T}$ for some $\delta>0$) such
that
$$
|f(\sigma(t))-f(s)-f^{\Delta}(t)(\sigma(t)-s)|
\leq\varepsilon|\sigma(t)-s|,\mbox{ for all $s\in U$}.
$$
We call $f^{\Delta}(t)$ the \emph{delta derivative} of $f$ at $t$.
Moreover, we say that $f$ is \emph{delta differentiable} on
$\mathbb{T}^k$ provided $f^{\Delta}(t)$ exists for all $t \in
\mathbb{T}^k$.

We note that when $\mathbb{T}=\mathbb{R}$, then $f:\mathbb{R}
\rightarrow \mathbb{R}$ is delta differentiable at $t \in
\mathbb{R}$ if and only if $$\displaystyle f^{\Delta}(t)=
\lim_{s\rightarrow t}\frac{f(t)-f(s)}{t-s}$$ exists, \textrm{i.e.}
if and only if $f$ is differentiable in the ordinary sense at $t$.
When $\mathbb{T}=\mathbb{Z}$, then $f:\mathbb{Z} \rightarrow
\mathbb{R}$ is always delta differentiable at $t \in \mathbb{Z}$
and
$$
f^{\Delta}(t)=\frac{f(\sigma(t))-f(t)}{\sigma(t)-t}= f(t+1)-f(t) \, .
$$
Hence, for $\mathbb{T}=\mathbb{Z}$ the delta derivative of $f$,
$f^{\Delta}$, coincides with the usual forward difference $\Delta f$.

It is clear that if $f$ is constant, then $f^{\Delta}=0$; if
$f(t)=k t$ for some constant $k$, then $f^{\Delta}=k$.

For delta differentiable functions $f$ and $g$, the next formulas
hold:
\begin{align*}
f^\sigma(t)&=f(t)+\mu(t)f^\Delta(t) \, ,\\
(fg)^\Delta(t)&=f^\Delta(t)g^\sigma(t)+f(t)g^\Delta(t)\\
&=f^\Delta(t)g(t)+f^\sigma(t)g^\Delta(t),
\end{align*}
where we abbreviate $f\circ\sigma$ by $f^\sigma$.

Delta derivatives of higher order are defined in the standard way:
we define the $r^{th}$-\emph{delta derivative} ($r\in\mathbb{N}$)
of $f$ to be the function
$f^{\Delta^r}:\mathbb{T}^{k^r}\rightarrow\mathbb{R}$, provided
$f^{\Delta^{r-1}}$ is delta differentiable on
$\mathbb{T}^{k^r}:=\left(\mathbb{T}^{k^{r-1}}\right)^k$.

The class of continuous functions on $\mathbb{T}$ is too small for
a convenient theory of integration. For our purposes, it is enough
to define the notion of integral in the class of rd-continuous
functions. For a more general theory of integration on time
scales, we refer the reader to \cite{Bohner-Peterson2}. A function
$f:\mathbb{T}\rightarrow\mathbb{R}$ is called \emph{rd-continuous}
if it is continuous at right-dense points and if its left-sided
limit exists (finite) at left-dense points. For
$\mathbb{T}=\mathbb{R}$ rd-continuity coincides with continuity.

We denote the set of all rd-continuous functions
$f:\mathbb{T}\rightarrow\mathbb{R}$ by
$C_{\textrm{rd}}(\mathbb{T}, \mathbb{R})$, and the set of all
delta differentiable functions with rd-continuous derivative by
$C_{\textrm{rd}}^1(\mathbb{T}, \mathbb{R})$.

It can be shown that every rd-continuous function $f$ possess an
\emph{antiderivative}, \textrm{i.e.} there exists a function $F$
with $F^\Delta=f$, and in this case the \emph{delta integral} is
defined by
$$\int_{a}^{b}f(t)\Delta t:=F(b)-F(a) \ \ \ \mbox{for all } a,b
\in \mathbb{T}.$$ This integral has the following property:
$$
\int_t^{\sigma(t)}f(\tau)\Delta\tau=\mu(t)f(t) \, .
$$
Let $a, b \in \mathbb{T}$ and $f\in C_{\textrm{rd}}(\mathbb{T},
\mathbb{R})$. It is easy to prove that
\begin{enumerate}
\item for $\mathbb{T}=\mathbb{R}$, $\int_{a}^{b}f(t)\Delta t =
\int_{a}^{b}f(t) dt$, where the integral on the right hand side is
the usual Riemann integral;

\item for $\mathbb{T}=\mathbb{Z}$, $\int_{a}^{b}f(t)\Delta t =
\sum_{t=a}^{b-1}f(t)$ if $a<b$, $\int_{a}^{b}f(t)\Delta t=0$ if
$a=b$, and $\int_{a}^{b}f(t)\Delta t = - \sum_{t=b}^{a-1}f(t)$ if
$a>b$.
\end{enumerate}

Next, we present the integration by parts formulas: if
$a,b\in\mathbb{T}$ and $f,g\in C_{\textrm{rd}}(\mathbb{T},
\mathbb{R})$, then
\begin{enumerate}
\item $\int_{a}^{b}f(\sigma(t))g^{\Delta}(t)\Delta t
=\left[(fg)(t)\right]_{t=a}^{t=b}-\int_{a}^{b}f^{\Delta}(t)g(t)\Delta
t$;

\item $\int_{a}^{b}f(t)g^{\Delta}(t)\Delta t
=\left[(fg)(t)\right]_{t=a}^{t=b}-\int_{a}^{b}f^{\Delta}(t)g(\sigma(t))\Delta
t$.
\end{enumerate}

Similarly to classical calculus, we say that
$f:\mathbb{T}\rightarrow\mathbb{R}^n$ is a \emph{rd-continuous}
function if each component of $f$,
$f_i:\mathbb{T}\rightarrow\mathbb{R}$, is a rd-continuous
function. The set of all such functions is denoted by
$C_{rd}(\mathbb{T}, \mathbb{R}^{n})$. The set
$C_{rd}^1(\mathbb{T}, \mathbb{R}^{n})$ is defined in the usual
way.


\section{Lagrange problem on time scales}
\label{sec:3}

Let $a, b \in \mathbb{T}$ such that $a<b$. In what follows we
denote by $[a,b]$ the set $\{t \in \mathbb{T}: a \leq t \leq b\}$.
Consider the following Lagrange  problem with delta differential
side condition:
\begin{equation}
\label{Lagrange Problem}
\begin{gathered}
J[x(\cdot),u(\cdot)] = \int_a^b L(t, x(t), u(t))\Delta
t\longrightarrow \min \, ,\\
x^{\Delta}(t) =  \varphi(t, x(t), u(t)) \, , \quad  t\in
\mathbb{T}^k \, , \\
x(a) = x_a \, ,
\end{gathered}
\end{equation}
where we assume that
\begin{itemize}
\item $L:\mathbb{T} \times \mathbb{R}^n \times
\mathbb{R}^m\rightarrow \mathbb{R}$, $\varphi:\mathbb{T} \times
\mathbb{R}^n \times \mathbb{R}^m\rightarrow \mathbb{R}^n$, $(x,u)
\rightarrow L(t,x,u)$ and $(x,u) \rightarrow \varphi(t,x,u)$ are
$C^1$-functions of $x$ and $u$ for each $t$;

\item $x(\cdot) \in C_{rd}^{1}(\mathbb{T}, \mathbb{R}^n)$ and
$u(\cdot) \in C_{rd}(\mathbb{T}, \mathbb{R}^m)$, $m\leq n$;

\item for each control function $u(\cdot)$ there exists a unique
forward solution $x(\cdot)$ of the initial value problem
$x^{\Delta}(t) = \varphi(t, x(t), u(t))$,
$x(a)=x_a$.\footnote{In the linear case,
conditions guaranteing the existence and uniqueness of forward
solutions are easy to obtain \cite{Zbigniew+Ewa}
-- \textrm{cf.} Section~\ref{sec:4}.}
\end{itemize}

We borrow from \cite{Delfim-Rui} the definition of admissible pair
and the definition of weak local minimizer for problem
(\ref{Lagrange Problem}). The reader interested in the calculus of
variations on time scales is referred to
\cite{Tangier-with-Rui,Delfim-Rui2} and references therein.

\begin{definition} The pair $(x_{*}(\cdot),u_{*}(\cdot))$ is said
to be \emph{admissible} for problem (\ref{Lagrange Problem}) if
\begin{enumerate}
\item  $x_{*}(\cdot) \in C_{rd}^{1}(\mathbb{T}, \mathbb{R}^n)$ and
$u_{*}(\cdot) \in C_{rd}(\mathbb{T}, \mathbb{R}^m)$;

\item  $x_*^{\Delta}(t) = \varphi(t, x_*(t), u_*(t))$ and
$x_{*}(a)=x_a$.
\end{enumerate}
\end{definition}

In order to introduce the notion of weak local minimizer for
problem (\ref{Lagrange Problem}), we define the following norms in
$C_{rd}(\mathbb{T}, \mathbb{R}^n)$ and $C_{rd}^1(\mathbb{T},
\mathbb{R}^n)$:
$$
||y||_{\infty}:= \sup_{t \in\mathbb{T}}
\parallel y(t)\parallel \, , \quad y \in C_{rd}(\mathbb{T},
\mathbb{R}^n) \, ,
$$
and
$$
||z||_{1,\infty} := \sup_{t \in\mathbb{T}^k}
\parallel z(t)\parallel
+ \sup_{t \in\mathbb{T}^k} \parallel z^{\Delta}(t)\parallel \, ,
\quad z \in C_{rd}^1(\mathbb{T}, \mathbb{R}^n) \, ,
$$
where $\parallel \cdot \parallel$ denotes the Euclidean norm in
$\mathbb{R}^n$.

\begin{definition}
An admissible pair $(x_{*}(\cdot),u_{*}(\cdot))$ is said to be a
\emph{weak local minimizer} for problem (\ref{Lagrange Problem})
if there exists $r \in \mathbb{R}^+$ such that
$$
J[x_{*}(\cdot),u_{*}(\cdot)] \leq J[x(\cdot),u(\cdot)]
$$
for all admissible pairs $(x(\cdot),u(\cdot))$ satisfying
$$
\parallel x-x_{*}\parallel_{1,\infty} + \parallel
u-u_{*}\parallel_{\infty}< r.
$$
\end{definition}

Theorem~\ref{PMP-time-scales} gives necessary conditions for a
pair $(x_{*}(\cdot),u_{*}(\cdot))$ to be a weak local minimizer of
the Lagrange problem (\ref{Lagrange Problem}).

\begin{theorem}[\cite{Delfim-Rui}]
\label{PMP-time-scales} If $(x_{*}(\cdot),u_{*}(\cdot))$ is a weak
local minimizer of problem (\ref{Lagrange Problem}), then there
exists a multiplier $\psi_{*}(\cdot):
\mathbb{T}\rightarrow\mathbb{R}^n$ that is  delta differentiable
on $\mathbb{T}^k$, such that
\begin{equation}
\label{x delta}
\begin{gathered}
x_{*}^\Delta(t)=
\mathcal{H}_{\psi^{\sigma}}(t,x_{*}(t),u_{*}(t),
\psi_{*}^{\sigma}(t)) \, ,\\
\psi_{*}^\Delta(t)= -
\mathcal{H}_{x}(t,x_{*}(t),u_{*}(t), \psi_{*}^{\sigma}(t)) \, ,\\
 \mathcal{H}_{u}(t,x_{*}(t),u_{*}(t),
\psi_{*}^{\sigma}(t))=0 \, ,\\
\psi_{*}(b)=0 \, ,
\end{gathered}
\end{equation}
for all $t \in \mathbb{T}^k$, where the Hamiltonian function
$\mathcal{H}$ is defined by
\begin{equation}
\label{eq:Ham}
\mathcal{H}(t, x, u,  \psi^\sigma):=  L(t,x,u) + \psi^\sigma\cdot
\varphi(t,x,u) \, .
\end{equation}
\end{theorem}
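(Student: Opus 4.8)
The plan is to establish the necessary conditions for the Lagrange problem on time scales by adapting the classical Lagrange multiplier technique, using the integration by parts formulas stated in Section~\ref{sec:2} as the key tool that replaces the ordinary integration by parts used in the continuous case. Since we have a constrained problem (the dynamics $x^\Delta=\varphi$ act as an equality constraint), I would first convert it into an unconstrained problem by introducing a time-dependent multiplier $\psi_*(\cdot)$ and forming the augmented functional
\begin{equation*}
\tilde{J}[x,u,\psi] = \int_a^b \Bigl[ L(t,x(t),u(t)) + \psi^\sigma(t)\cdot\bigl(\varphi(t,x(t),u(t)) - x^\Delta(t)\bigr)\Bigr]\Delta t \, ,
\end{equation*}
so that, using the definition \eqref{eq:Ham} of $\mathcal{H}$, the integrand becomes $\mathcal{H}(t,x,u,\psi^\sigma) - \psi^\sigma\cdot x^\Delta$. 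On admissible pairs the constraint vanishes, so $\tilde{J}$ agrees with $J$; hence the weak local minimizer of $J$ is a minimizer of $\tilde J$ among admissible variations. The reason for pairing $\psi^\sigma$ (rather than $\psi$) with the constraint is precisely to match the time-scale integration by parts formula, which is the crucial structural choice in this argument.

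Next I would consider admissible variations $x = x_* + \epsilon\, h$ and $u = u_* + \epsilon\, k$ with $h\in C^1_{rd}(\mathbb{T},\mathbb{R}^n)$ satisfying $h(a)=0$ (to respect the fixed initial condition) and $k\in C_{rd}(\mathbb{T},\mathbb{R}^m)$, and compute the first variation $\frac{d}{d\epsilon}\tilde J\big|_{\epsilon=0}$, which must vanish at a minimizer. Differentiating under the integral sign and using the $C^1$-regularity of $L$ and $\varphi$, this first variation takes the form
\begin{equation*}
\int_a^b \Bigl[ \mathcal{H}_x\cdot h + \mathcal{H}_u\cdot k - \psi^\sigma\cdot h^\Delta\Bigr]\Delta t = 0 \, ,
\end{equation*}
where all partial derivatives of $\mathcal H$ are evaluated along $(t,x_*(t),u_*(t),\psi_*^\sigma(t))$. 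The essential step is to handle the term $\int_a^b \psi^\sigma\cdot h^\Delta\,\Delta t$: applying the first integration by parts formula from Section~\ref{sec:2} with $f=\psi_*$ and $g=h$ gives $\left[\psi_*\cdot h\right]_a^b - \int_a^b \psi_*^\Delta\cdot h\,\Delta t$. Since $h(a)=0$, the boundary term reduces to $\psi_*(b)\cdot h(b)$.

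Collecting terms, the vanishing first variation reads
\begin{equation*}
\int_a^b \Bigl[\bigl(\mathcal{H}_x + \psi_*^\Delta\bigr)\cdot h + \mathcal{H}_u\cdot k\Bigr]\Delta t - \psi_*(b)\cdot h(b) = 0
\end{equation*}
for all admissible $h$ (with $h(a)=0$) and $k$. I would then argue by freely choosing the variations: taking $k=0$ and $h$ vanishing at $b$, a fundamental (du~Bois-Reymond type) lemma of the time-scale calculus of variations forces $\psi_*^\Delta = -\mathcal{H}_x$ pointwise on $\mathbb{T}^k$, which is the adjoint equation; with this identity in force, letting $h$ be arbitrary at $b$ yields the transversality condition $\psi_*(b)=0$; finally, taking $h=0$ and $k$ arbitrary gives the stationarity condition $\mathcal{H}_u=0$. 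The remaining equation $x_*^\Delta = \mathcal{H}_{\psi^\sigma} = \varphi$ is just the original dynamics, recovered because $\partial\mathcal H/\partial\psi^\sigma = \varphi$ by \eqref{eq:Ham}.

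The main obstacle I anticipate is twofold and genuinely time-scale specific. First, the validity of differentiating $\tilde J$ under the delta integral and the rigorous justification of the fundamental lemma require care on an arbitrary time scale, where right-scattered points and the interplay between $h$ and $h^\sigma = h + \mu h^\Delta$ must be tracked; in particular one must verify that the variation $x_*+\epsilon h$ remains admissible, i.e.\ that it satisfies the constraint to first order, which is where the choice of independent $h$ and $k$ (rather than requiring $h$ to solve the linearized dynamics) is legitimized by the multiplier. Second, the asymmetry between $f$ and $f^\sigma$ in the two integration by parts formulas means the multiplier must be coupled to the constraint as $\psi^\sigma$ throughout; choosing $\psi$ instead would produce the wrong evaluation point in the adjoint equation, so the bookkeeping of $\sigma$-shifts is the delicate point that distinguishes this proof from its classical counterpart. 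Since the statement is quoted from \cite{Delfim-Rui}, I would present this as the structure of the argument and defer the full technical verification of the fundamental lemma on time scales to that reference.
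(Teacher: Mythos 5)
A preliminary remark on the comparison itself: this paper never proves Theorem~\ref{PMP-time-scales}; it is imported as a citation from \cite{Delfim-Rui}, so there is no in-paper proof to measure your sketch against, and your attempt has to be judged on its own terms. On those terms, the overall shape is right and your $\sigma$-bookkeeping is correct: pairing the constraint with $\psi^\sigma$ (not $\psi$) and applying the first integration by parts formula of Section~\ref{sec:2} with $f=\psi_*$, $g=h$ indeed yields $\left[\psi_*\cdot h\right]_a^b-\int_a^b\psi_*^\Delta\cdot h\,\Delta t$, which is exactly what makes the adjoint equation appear with $\psi_*^\sigma$ inside $\mathcal{H}$ and $\psi_*^\Delta$ on the left, as in (\ref{x delta}).

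There is, however, a genuine logical gap at the central step. You claim the first variation of $\tilde J$ vanishes for all \emph{independent} $h$ (with $h(a)=0$) and $k$. That is not justified: optimality of $(x_*,u_*)$ controls $\tilde J$ only along directions that preserve admissibility to first order, i.e.\ pairs $(h,k)$ linked by the linearized dynamics $h^\Delta=\varphi_x\,h+\varphi_u\,k$, $h(a)=0$; for unlinked $(h,k)$ the comparison pair $(x_*+\epsilon h,\,u_*+\epsilon k)$ violates the state equation, so the value of $\tilde J$ there carries no information about the minimizer. In particular, your key choice ``$k=0$ and $h$ free, vanishing at $b$'', from which you extract the adjoint equation via the fundamental lemma, is unavailable: with $k=0$, the linearized dynamics and uniqueness of forward solutions force $h\equiv 0$. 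Your parenthetical defense that the independence of $h$ and $k$ is ``legitimized by the multiplier'' is circular, because the existence of the multiplier is precisely what the theorem asserts; and deferring ``the fundamental lemma'' to \cite{Delfim-Rui} does not cover this gap, since the gap is the multiplier rule, not the fundamental lemma. The standard non-circular argument reverses the order of your steps: first \emph{define} $\psi_*$ as the solution of the terminal value problem $\psi^\Delta(t)=-\mathcal{H}_x(t,x_*(t),u_*(t),\psi^\sigma(t))$, $\psi(b)=0$ --- whose existence is a separate fact about linear delta dynamic equations, solved backwards from $t=b$ (compare Section~\ref{sec:4} and \cite{Bohner-Peterson1}) --- and only then integrate by parts. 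With this choice of $\psi_*$, the $h$-terms and the boundary terms cancel identically along every \emph{linked} variation, and the remaining freedom in $k$ alone gives $\int_a^b\mathcal{H}_u\cdot k\,\Delta t=0$ for all rd-continuous $k$, hence $\mathcal{H}_u=0$; the first condition in (\ref{x delta}) and the terminal condition then hold by construction. In short, the missing idea is that the multiplier must be constructed as the adjoint solution rather than derived from unconstrained variational freedom.
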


\begin{remark}
From definition (\ref{eq:Ham}) of the Hamiltonian function $\mathcal{H}$,
it follows that the first condition in (\ref{x delta}) holds
for any admissible pair $(x_{*}(\cdot),u_{*}(\cdot))$ of problem
(\ref{Lagrange Problem}).
\end{remark}

\begin{remark}
For the time scale $\mathbb{T}= \mathbb{R}$,
Theorem~\ref{PMP-time-scales} is a particular case of Pontryagin's
Maximum Principle \cite{Pontryagin}.
\end{remark}


\section{Linear systems on times scales}
\label{sec:4}

Let us consider the following initial value problem on a time
scale $\mathbb{T}$:

\begin{equation}
\label{IVP1}
\begin{cases}
x^{\Delta}(t) =  A x(t) + f(t) \, , \\
x(a) = x_a \, ,
\end{cases}
\end{equation}
where $A$ is a constant $n\times n$ matrix, $f: \mathbb{T}
\rightarrow\mathbb{R}^n$ is a rd-continuous function, $a \in
\mathbb{T}$ and $x_a \in \mathbb{R}^n$.

Similar to control theory \cite{Zbigniew+Ewa},
in dynamic game theory we are interested in forward solutions.
The purpose of this section is to present conditions assuring
problem (\ref{IVP1}) to have a unique forward solution.

\begin{proposition}[\cite{Bohner-Peterson1}]
The initial value problem
\begin{equation*}
\label{IVP3}
\begin{cases}
x^{\Delta}(t) =  A x(t) \, ,\\
x(a) = x_a \, ,
\end{cases}
\end{equation*}
where $A$ is a constant $n\times n$ matrix, $a \in \mathbb{T}$ and
$x_a \in \mathbb{R}^n$, has a unique forward solution.
\end{proposition}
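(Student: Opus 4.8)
The plan is to recast the initial value problem as an equivalent delta-integral equation and then to obtain existence and uniqueness of the forward solution by a contraction argument, exploiting the fact that the right-hand side is globally Lipschitz in the state. First I would observe that, by the relation between the delta integral and antiderivatives, a function $x(\cdot)$ is a forward solution on $[a,T] := \{t \in \mathbb{T} : a \leq t \leq T\}$ if and only if it is rd-continuous and satisfies
\begin{equation*}
x(t) = x_a + \int_a^t A\, x(s)\, \Delta s \, , \qquad t \in [a,T] \, .
\end{equation*}
The map $x \mapsto A x$ is rd-continuous in $t$ and globally Lipschitz in the state variable, since $\| A x - A y \| \leq \| A \|\, \| x - y \|$, where $\|A\|$ denotes a matrix norm compatible with the Euclidean norm $\|\cdot\|$.

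Next I would set up a fixed-point scheme on the Banach space $C_{rd}([a,T], \mathbb{R}^n)$. Defining the operator $(\mathcal{T} x)(t) := x_a + \int_a^t A\, x(s)\, \Delta s$, a standard estimate for the delta integral shows that a sufficiently high iterate $\mathcal{T}^j$ is a contraction; equivalently, one may replace the supremum norm by an exponentially weighted norm $\sup_{t} e_{\ominus c}(t,a)\,\|x(t)\|$ with $c > \|A\|$ (available from \cite{Bohner-Peterson1}), under which $\mathcal{T}$ itself contracts. By the Banach fixed-point theorem, $\mathcal{T}$ has a unique fixed point on $[a,T]$; and because the Lipschitz constant $\|A\|$ is global, the argument applies on every $[a,T]$ and the resulting solutions agree on overlaps. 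Letting $T$ range over $\mathbb{T}$ then yields a unique forward solution defined for all $t \geq a$.

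Finally, I would record the feature that makes the statement specific to \emph{forward} solutions. At a right-scattered point $t$, the dynamic equation $x^{\Delta}(t) = A x(t)$ is, via $x^\sigma(t) = x(t) + \mu(t) x^\Delta(t)$, the algebraic recursion
\begin{equation*}
x(\sigma(t)) = \bigl(I + \mu(t) A\bigr) x(t) \, ,
\end{equation*}
which determines $x(\sigma(t))$ uniquely and unconditionally from $x(t)$, with no invertibility of $I + \mu(t)A$ required, in contrast to the backward direction. The main obstacle is therefore not algebraic but rather making the uniqueness estimate uniform across right-dense and right-scattered points simultaneously. The delta integral is precisely the tool that achieves this, since $\int_t^{\sigma(t)} f(\tau)\,\Delta\tau = \mu(t) f(t)$ reproduces the jump recursion at scattered points while reducing to the ordinary Riemann integral on right-dense stretches, so that a single Gronwall-type bound controls the whole forward trajectory.
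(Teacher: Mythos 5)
Your argument is correct, but there is nothing in the paper to compare it against: the paper states this proposition without proof, quoting it directly from \cite{Bohner-Peterson1}. Your route --- recasting the dynamic equation as the delta-integral equation $x(t)=x_a+\int_a^t A\,x(s)\,\Delta s$ and applying the Banach fixed-point theorem on $C_{rd}([a,T],\mathbb{R}^n)$, either via a sufficiently high iterate of the integral operator or via a Bielecki-type weighted norm $\sup_t e_{\ominus c}(t,a)\,\|x(t)\|$ with $c>\|A\|$ --- is essentially the standard Picard--Lindel\"{o}f argument by which the cited reference obtains existence and uniqueness for dynamic equations, so your proposal reconstructs the omitted proof rather than diverging from it. The details check out: $C_{rd}$ is complete under the supremum norm, the weighted norm is equivalent to it on a compact time-scale interval (so the fixed point is unchanged), the identity $\int_a^t e_c(s,a)\,\Delta s=(e_c(t,a)-1)/c$ yields the contraction constant $\|A\|/c<1$, and uniqueness on each compact interval lets the solutions glue consistently as $T$ increases. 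Your closing remark is also the substantive point in the context of this paper: at a right-scattered $t$ the equation propagates forward as $x(\sigma(t))=(I+\mu(t)A)\,x(t)$ with no invertibility of $I+\mu(t)A$ required, which is precisely why \emph{forward} solutions need no regressivity hypothesis, in contrast with backward solutions; this is the distinction that Section~\ref{sec:4} and Theorem~\ref{Zbigniew+Ewa} (after \cite{Zbigniew+Ewa}) exist to make.
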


The \emph{matrix exponential function} (also known as the
\emph{fundamental matrix solution}) at $a$ for the matrix $A$, is
defined as the unique forward solution of the matrix differential
equation
$$X^{\Delta}(t)= AX(t),$$ with the initial condition $X(a)=I$,
where $I$ denotes the $n\times n$ identity matrix. Its value at
$t$ is denoted by $e_A(t,a)$.

\begin{theorem}[\textrm{cf.} \cite{Zbigniew+Ewa}]
\label{Zbigniew+Ewa} The initial value problem (\ref{IVP1}) has a
unique forward solution of the form
$$
x(t)= e_A(t,a)x_a + \int_a^t e_A(t,\sigma(s))f(s) \Delta s \, .
$$
\end{theorem}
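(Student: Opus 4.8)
The plan is to establish the two assertions of the theorem separately: first \emph{existence}, by verifying directly that the proposed closed form solves the initial value problem, and then \emph{uniqueness}, by reducing the difference of two forward solutions to the homogeneous problem already treated in the preceding Proposition.

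For existence, denote $x(t)= e_A(t,a)x_a + \int_a^t e_A(t,\sigma(s))f(s)\,\Delta s$. The initial condition is immediate: at $t=a$ the integral is empty and $e_A(a,a)=I$, so $x(a)=x_a$. The substance is to show that $x^{\Delta}(t)=Ax(t)+f(t)$, which I would do by differentiating term by term. The first term gives $\bigl(e_A(t,a)x_a\bigr)^{\Delta}=Ae_A(t,a)x_a$, directly from the defining equation $e_A^{\Delta}(t,a)=Ae_A(t,a)$. The delicate term is the integral, in which $t$ occurs both in the upper limit and inside the kernel, and I see two routes. One is to pull the factor $e_A(t,a)$ out of the integral using the composition law $e_A(t,\sigma(s))=e_A(t,a)e_A(a,\sigma(s))$, so that $x(t)=e_A(t,a)z(t)$ with $z(t)=x_a+\int_a^t e_A(a,\sigma(s))f(s)\,\Delta s$; then the product rule, in the form $x^{\Delta}(t)=e_A^{\Delta}(t,a)z(t)+e_A(\sigma(t),a)z^{\Delta}(t)$, together with $z^{\Delta}(t)=e_A(a,\sigma(t))f(t)$ and the identity $e_A(\sigma(t),a)e_A(a,\sigma(t))=I$, collapses everything to $Ax(t)+f(t)$. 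The other route applies the Leibniz rule for delta-differentiation under the integral sign, $\bigl(\int_a^t g(t,s)\,\Delta s\bigr)^{\Delta}=g(\sigma(t),t)+\int_a^t g^{\Delta}(t,s)\,\Delta s$ (the first-variable delta derivative), with $g(t,s)=e_A(t,\sigma(s))f(s)$: here $g(\sigma(t),t)=e_A(\sigma(t),\sigma(t))f(t)=f(t)$ supplies the inhomogeneous term, while $g^{\Delta}(t,s)=Ae_A(t,\sigma(s))f(s)$ reassembles $A$ times the integral, again yielding $x^{\Delta}(t)=Ax(t)+f(t)$.

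For uniqueness I would let $x_1,x_2$ be two forward solutions of (\ref{IVP1}) and set $w=x_1-x_2$. Subtracting the two equations, the forcing term $f$ cancels, so $w^{\Delta}(t)=Aw(t)$ with $w(a)=0$. By the preceding Proposition, the homogeneous problem with this initial datum has a unique forward solution, which is plainly $w\equiv 0$; hence $x_1$ and $x_2$ coincide on $[a,\sup\mathbb{T})$.

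I expect the main obstacle to be the differentiation of the integral term, precisely because $t$ appears simultaneously in the limit and in the kernel. The first route trades this difficulty for the composition law of the matrix exponential; I should check that this law, and in particular the backward evaluation $e_A(a,\sigma(s))$ that it introduces, is legitimate, since it rests on invertibility of $e_A$, that is, on regressivity of $A$. The second route needs the time-scale Leibniz rule, which is the genuinely nontrivial ingredient but uses only forward evaluations of $e_A$ and the identity $e_A(\sigma(t),\sigma(t))=I$; for that reason I would favour it as the cleaner argument. Everything else---the product rule, the fundamental theorem giving $z^{\Delta}$, and the reduction in the uniqueness part---is routine given the results recalled above.
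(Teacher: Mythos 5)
Your argument is correct, but note first that the paper contains no internal proof of this theorem to compare against: it is stated with ``cf.'' and imported from \cite{Zbigniew+Ewa}, so what follows is an assessment of your proof on its own merits. The decisive point is the one you flagged yourself: your first route is \emph{not} available in this setting, and your preference for the second route is what makes the proof work. On a general time scale a constant matrix $A$ need not be regressive, i.e.\ $I+\mu(t)A$ may be singular (take $\mathbb{T}=\mathbb{Z}$, so $\mu\equiv 1$, and $A=-I$); then $e_A(\cdot,\cdot)$ is not invertible, the backward evaluation $e_A(a,\sigma(s))$ for $\sigma(s)>a$ is undefined, and the composition law $e_A(t,\sigma(s))=e_A(t,a)\,e_A(a,\sigma(s))$ simply fails. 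Avoiding regressivity hypotheses is precisely the reason why the paper and \cite{Zbigniew+Ewa} insist on \emph{forward} solutions. Your second route respects this: the kernel $e_A(t,\sigma(s))$ involves only forward evaluations (for $s$ in the range of integration one has $\sigma(s)\leq t$, since $t\in\mathbb{T}$ and $t>s$), and the time-scale rule for differentiating under the integral sign (see \cite{Bohner-Peterson1}) gives
$$
\left(\int_a^t e_A(t,\sigma(s))f(s)\,\Delta s\right)^{\Delta}
= e_A(\sigma(t),\sigma(t))f(t)+\int_a^t A\,e_A(t,\sigma(s))f(s)\,\Delta s
= f(t)+A\int_a^t e_A(t,\sigma(s))f(s)\,\Delta s \, ,
$$
which, combined with $\bigl(e_A(t,a)x_a\bigr)^{\Delta}=A\,e_A(t,a)x_a$ and $e_A(a,a)=I$, yields $x^{\Delta}(t)=Ax(t)+f(t)$ and $x(a)=x_a$. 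Your uniqueness step is also sound: the difference $w$ of two forward solutions satisfies $w^{\Delta}(t)=Aw(t)$, $w(a)=0$, and the Proposition preceding the theorem forces $w\equiv 0$. In short, this is a correct and self-contained proof, provided you commit to the Leibniz-rule route; relying on the composition-law route instead would be a genuine gap, since regressivity of $A$ is nowhere assumed.
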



\section{$N$-player delta differential games}
\label{sec:5}

The (classical) term ``$N$-player dynamic game''
is applied to a group of problems in applied mathematics that
possess certain characteristics related with conflict problems.
The main ingredients in a $N$-player dynamic game
are the players, the control variables, the state variables, and
the cost functionals/functions. The relation between state and
control variables is given by a differential/difference equation.
Two types of games can be considered: cooperative or
noncooperative games. In this paper we shall restrict ourselves to
noncooperative games. In a noncooperative game the players act
independently in the pursuit of their own best interest, each
player desiring to attain the smallest possible cost.

Following Jank \cite{Jank} (see also \cite{livroJank}),
we introduce the notion of a $N$-player delta
differential game (noncooperative dynamic game
in the context of time scales).

\begin{definition}
\label{Delta-differential games}
Let $N \in \mathbb{N}$. We say that
$$\Gamma_N =(\mathcal{T}, X, U_i, \mathcal{U}_i,
\sigma_i, f, x_a, \eta^i, J^i)_{i=1,2,\ldots, N}$$ is a
$N$-\emph{player delta differential game} if:
\begin{enumerate}
\item $\mathcal{T}$ is a closed nonempty  interval of $\mathbb{T}$
($\mathcal{T}$ is called the \emph{time horizon});

\item $X$ is a finite dimensional Euclidean space ($X$ is called
the \emph{state} or \emph{phase} space);

\item $U_i$ is a finite dimensional Euclidean space ($U_i$ is
called the \emph{control value space} of the $i$-th player);

\item $\sigma_i$ is a subset of a set of mappings
$$
\{\gamma^i | \gamma^i: \mathcal{T}\times \mathcal{P}(X)
\rightarrow U_i\}
$$
($\gamma^i$ is called a \emph{strategy} of the i-th player while
$\sigma_i$ is called the set of \emph{possible strategies} of the
i-th player);

\item $x$ is a mapping from $\mathcal{T}$ to $X$ ($x$ is called
the \emph{state variable});

\item $\eta^i :\mathcal{T} \rightarrow  \mathcal{P}(X)$ is a
mapping with the property
$$
\eta^i(t)  \subseteq \{ x(s) \  | \ a \leq s \leq t\}
$$
($\eta^i$ is called the \emph{information structure} of the i-th
player);

\item $\mathcal{U}_i= \{\gamma^i(\cdot, \eta^i(\cdot)) \ | \
\gamma^i \in  \sigma_i\}$ ($\mathcal{U}_i$ is called the
\emph{control space} or \emph{decision set} of the i-th player);

\item $f: \mathcal{T}\times X \times \mathcal{U}_1 \times \cdots
\times \mathcal{U}_N \rightarrow X$ is a mapping that describes an
outer force acting on the system by the delta differential
equation
$$
x^{\Delta}(t)= f(t,x(t),u^1(t), \ldots, u^N(t))
$$
with the initial condition $x(a)=x_a \in X$ ($u^i \in
\mathcal{U}_i$ is called the \emph{control function} or
\emph{control variable} of the $i$-th player);

\item $J^i$ is a mapping from $\mathcal{U}_1 \times \cdots \times
\mathcal{U}_N$ to $\mathbb{R}$  ($J^i$ is called the \emph{cost
functional} of the i-th player).
\end{enumerate}
\end{definition}

\begin{remark}
For each $i=1,2,\ldots,N$, the controls $u^i(\cdot) =
\gamma^i(\cdot, \eta^i(\cdot)) \in \mathcal{U}_i$ are built up
from chosen strategies $\gamma^i$ with a specific information
structure $\eta^i$.
\end{remark}

\begin{remark}
The cost functional $J^i$ of the $i$-th
player, $i \in \{1,2,\ldots,N\}$, depends on the controls of all
the $N$ players.
\end{remark}

\begin{definition} A $N$-tuple of control functions $u(\cdot)=
(u^1(\cdot), \ldots, u^N(\cdot))$ is called an \emph{admissible
control} for the system $\Gamma_N$ if $u$ is a rd-continuous
function  and there exists a unique forward solution $x \in
C^1_{rd}(\mathcal{T}, X)$ of the initial value problem
$x^{\Delta}(t)= f(t,x(t),u^1(t), \ldots, u^N(t))$, $x(a)=x_a$.
\end{definition}

We shall consider the following equilibrium concept in a
$N$-player delta differential game.

\begin{definition}
\label{Nash equilibrium} A decision $N$-tuple $u_{*}=(u_{*}^{1},
\ldots,u_{*}^{N}) \in \mathcal{U}_1 \times \cdots \times
\mathcal{U}_N$ is called a \emph{Nash-equilibrium} if for all
$i=1,2,\ldots, N$,
$$
J^i(u_{*}^{1}, \ldots,u_{*}^{N}) \leq J^i(u_{*}^{1},
\ldots,u_{*}^{i-1}, u^i, u_{*}^{i+1}, \ldots, u_{*}^{N})
$$
for all $u^i \in \mathcal{U}_i$.
\end{definition}

The meaning of the  Nash-equilibrium is the following: if the
$i$-th player unilaterally change the strategy $u_{*}^{i}$, then
his cost will increase.

We remark that the information available to the players is an
important aspect of the game. In this paper we consider open-loop
and memoryless perfect state information structures.

\begin{definition}\label{information structure} Let $\Gamma_N$ be a
$N$-player delta differential game. We say that $\Gamma_N$ has
\begin{enumerate}
\item \emph{Open-loop} (OL) information structure if
$\eta^i(t)=\{x_a\}, t \in \mathcal{T}$, for all $i \in
\{1,2,\ldots,N\}$;

\item \emph{Memoryless perfect state} (MPS) information structure
if $\eta^i(t)=\{x_a, x(t)\}$, $t \in \mathcal{T}$, for all $i \in
\{1,2,\ldots,N\}$.
\end{enumerate}
\end{definition}

Therefore, in the open-loop information structure each player know
only the initial position $x_a$, while in the memoryless perfect
state information structure each player know the phase state
$x(t)$ at each time instant $t$ as well as the initial position
$x_a$.

\begin{definition}
We say that a control $u^{i}(t):=\gamma^i(t, \eta^i(t))$ is: (i)
an \emph{OL control}, if the information structure $\eta^i$ is OL;
(ii) a \emph{MPS control}, if the information structure $\eta^i$
is MPS.
\end{definition}


\section{Main results}
\label{sec:6}

Using Theorem~\ref{PMP-time-scales}, we deduce necessary
conditions for OL and MPS Nash-equilibrium of $N$-player games
with linear delta differential equations. We deal with cost
delta-integral functionals of the following type:
\begin{equation}
\label{cost} J^i(u^1, \ldots, u^N)= \int_a^b L^i(t, x(t), u^1(t),
\ldots, u^N(t)) \Delta t \, ,
\end{equation}
where we suppose the following (from now on, $\mathbb{T}$ denotes
$\mathbb{T} \cap [a,b]$):
\begin{itemize}
\item for each $i=1,2,\ldots,N$, $L^i:\mathbb{T} \times
\mathbb{R}^n \times \mathbb{R}^{m_1} \times \ldots \times
\mathbb{R}^{m_N} \rightarrow \mathbb{R}$ is a $C^1$-function of
$x$ and $u=(u^1, \ldots, u^N)$ for each $t$; $\max
\{m_1,\ldots,m_N \} \leq n$;

\item for each $i=1,2,\ldots,N$, $u^i \in \mathcal{U}_i$ is
admissible;

\item $x$ is the forward solution of the delta differentiable
initial value problem
\begin{equation}
\label{IVP}
\begin{cases}
x^{\Delta}(t) =  Ax(t)+B^1 u^1(t) + \cdots + B^N u^N(t) \, ,\\
x(a) = x_a \, ,
\end{cases}
\end{equation}
where $A$ is a constant $n\times n$ matrix, and for each
$i=1,2,\ldots,N$, $B^i$ is a constant $n\times m_i$ matrix.
\end{itemize}

\begin{remark}
Since each $u^i \in C_{rd}(\mathbb{T},\mathbb{R}^{m_i})$, and
therefore $B^1 u^1 + \cdots +B^N u^N$ is a rd-continuous function,
by Theorem~\ref{Zbigniew+Ewa} the initial value problem
(\ref{IVP}) has a unique forward solution.
\end{remark}

\begin{theorem}[Necessary conditions for a linear OL Nash-equilibrium]
\label{Theo1} Let $\Gamma_N$ be a $N$-player delta differential
game, where the cost functional of the i-th player is given by
(\ref{cost}). If the decision $N$-tuple $(u^1_{*},\ldots,
u^N_{*})$ is an OL Nash-equilibrium of \ $\Gamma_N$, and if $x_{*}$
is the associated trajectory of the state, then there exist
delta differentiable functions
$\psi^i:\mathbb{T}\rightarrow \mathbb{R}^n$ on $\mathbb{T}^k$,
$i=1,2,\ldots, N$, such that for
$$
H^i(t,x,u^1, \ldots, u^N, (\psi^i)^{\sigma}) := L^i(t,x,u^1,
\ldots, u^N) + (\psi^i)^{\sigma} \cdot (Ax+B^1 u^1 + \cdots + B^N
u^N)
$$
one has:
\begin{enumerate}
\item $x_{*}^\Delta(t)=
H^i_{(\psi^i)^{\sigma}}(t,x_{*}(t),u^1_{*}(t),\ldots, u^N_{*}(t),
(\psi^i)^{\sigma}(t))$;

\item $x_{*}(a)=x_a$;

\item $({\psi^{i}})^\Delta(t)= -
H_{x}^i(t,x_{*}(t),u^1_{*}(t),\ldots, u^N_{*}(t),
(\psi^i)^{\sigma}(t))$;

\item $\psi^i(b)=0$;

\item $\displaystyle H^i_{u^i}(t,x_{*}(t),u^1_{*}(t),\ldots,
u^N_{*}(t), (\psi^i)^{\sigma}(t))= 0$;

\end{enumerate}
for all $i=1,2, \ldots, N$ and $t \in \mathbb{T}^k$.
\end{theorem}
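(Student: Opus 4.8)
The plan is to reduce the $N$-player equilibrium problem to $N$ separate single-player Lagrange problems and then invoke Theorem~\ref{PMP-time-scales} once for each player. The key is Definition~\ref{Nash equilibrium}: fix an index $i$ and freeze the remaining controls at their equilibrium values $u^j_*$, $j \neq i$. The defining inequality of a Nash-equilibrium then states exactly that $u^i_*$ minimizes $J^i$ over all admissible $u^i \in \mathcal{U}_i$. In the open-loop case $\eta^i(t) = \{x_a\}$, so each control is effectively an rd-continuous function of $t$ alone and the minimization runs over all of $C_{rd}(\mathbb{T}, \mathbb{R}^{m_i})$; a global minimizer over this set is in particular a weak local minimizer in the sense required by Theorem~\ref{PMP-time-scales}.

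First I would write out, for each $i$, the reduced Lagrange problem of the form (\ref{Lagrange Problem}) in the single control $u^i$, with running cost
$$
\tilde{L}^i(t,x,u^i) := L^i(t,x,u^1_*(t),\ldots,u^{i-1}_*(t),u^i,u^{i+1}_*(t),\ldots,u^N_*(t))
$$
and delta-dynamics $x^\Delta = Ax + B^i u^i + \sum_{j\neq i} B^j u^j_*(t) =: \varphi^i(t,x,u^i)$. Substituting the frozen functions $u^j_*(t)$ preserves $C^1$ dependence on $(x,u^i)$ and leaves the dynamics linear with an rd-continuous inhomogeneity, so the unique-forward-solution hypothesis holds by Theorem~\ref{Zbigniew+Ewa}; together with $m_i \leq n$, every hypothesis of problem (\ref{Lagrange Problem}) is met.

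Next I would apply Theorem~\ref{PMP-time-scales} to each reduced problem, obtaining a delta differentiable multiplier $\psi^i$ that satisfies the state equation, the adjoint equation $(\psi^i)^\Delta = -\mathcal{H}^i_x$, the stationarity condition $\mathcal{H}^i_{u^i}=0$, and the transversality condition $\psi^i(b)=0$, where $\mathcal{H}^i$ is the Hamiltonian of the reduced problem defined as in (\ref{eq:Ham}). The essential bookkeeping point is that this reduced Hamiltonian is precisely the function $H^i$ of the statement evaluated along the frozen controls, $\mathcal{H}^i(t,x,u^i,(\psi^i)^\sigma) = H^i(t,x,u^1_*(t),\ldots,u^i,\ldots,u^N_*(t),(\psi^i)^\sigma)$. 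Since the substituted controls $u^j_*(t)$ depend on neither $x$ nor $u^i$, the partial derivatives $H^i_x$, $H^i_{u^i}$ and $H^i_{(\psi^i)^\sigma}$ evaluated at the equilibrium agree with those of $\mathcal{H}^i$. Thus conditions 1, 3, 4 and 5 transcribe directly from (\ref{x delta}), condition 2 is the initial constraint of (\ref{IVP}), and letting $i$ range over $1,\ldots,N$ finishes the argument.

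The step demanding the most care is the reduction itself: verifying that the Nash inequality genuinely yields a weak local minimizer of a bona fide instance of (\ref{Lagrange Problem}), and that the smoothness and unique-forward-solution requirements survive the freezing of the other players' controls. Once this is secured, the stated necessary conditions are an immediate rereading of Theorem~\ref{PMP-time-scales} for each index $i$.
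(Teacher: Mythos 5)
Your proposal is correct and follows essentially the same route as the paper's own proof: freeze the other players' controls at their equilibrium values, use the Nash inequality to see that $(x_*(\cdot),u^i_*(\cdot))$ is a weak local minimizer of the resulting single-player Lagrange problem, apply Theorem~\ref{PMP-time-scales} to the reduced Hamiltonian, and observe that its partial derivatives coincide with those of $H^i$ at the equilibrium. Your version is in fact slightly more explicit than the paper's on two points it leaves tacit — that a global minimizer over $\mathcal{U}_i$ is a fortiori a weak local minimizer, and that the frozen dynamics still admit a unique forward solution via Theorem~\ref{Zbigniew+Ewa}.
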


\begin{proof}
Suppose that the decision $N$-tuple $(u_{*}^{1},
\ldots,u_{*}^{N})$ is an OL Nash-equilibrium of \ $\Gamma_N$ and
$x_{*}$ is the associated trajectory of the state. For each
$i=1,\ldots,N$ consider the functional
$$
\widetilde{J}^i: \mathcal{U}_i \rightarrow \mathbb{R}
$$
defined by
$$
\widetilde{J}^i(u^i):= J^i(u_{*}^{1}, \ldots,u_{*}^{i-1}, u^i,
u_{*}^{i+1}, \ldots, u_{*}^{N}).
$$
Define also
$$
f(x,u^1, \ldots, u^N):= Ax + B^1u^1 + \cdots + B^N u^N.
$$
Since $(u_{*}^{1}, \ldots,u_{*}^{N})$ is a Nash-equilibrium, then
for all $u^i \in \mathcal{U}_i$
$$
\widetilde{J}^i(u_ {*}^i) \leq \widetilde{J}^i(u^i).
$$
Therefore, $(x_{*}(\cdot), u_{*}^i(\cdot))$ is a weak local
minimizer of the problem
\begin{equation}
\label{problem}
\begin{gathered}
\widetilde{J}^i(u^i) \longrightarrow \min \, ,\\
x^{\Delta}(t) = \widetilde{f}(x(t),u^i(t)) \, , \\
x(a) = x_a \, ,
\end{gathered}
\end{equation}
where $\widetilde{f}(x,u^i) := f(x, u^1_{*},
\ldots, u^{i-1}_{*}, u^i,
u^{i+1}_{*}, \ldots, u^{N}_{*})$. Define
$$
\mathcal{H}^i(t,x,u^i,(\psi^i)^{\sigma}):= H^i(t,x,u^1_*, \ldots,
u_{*}^{i-1}, u^i, u_{*}^{i+1}, \ldots, u^N_*, (\psi^i)^{\sigma}) \, ,
$$
with $H^i$ the Hamiltonian defined in the statement of the
theorem. Applying Theorem~\ref{PMP-time-scales} to
$\mathcal{H}^i$ (note that $\widetilde{f}$ is a $C^1$-function of
$x$ and $u^i$ for each $t$), we conclude that there exists a
function $\psi^i : \mathbb{T}\rightarrow \mathbb{R}^n$ that is
delta differentiable on $\mathbb{T}^k$, such that $\psi^i(b)=0$,
\begin{equation*}
\begin{split}
x_{*}^\Delta(t) & =
\mathcal{H}^i_{({\psi^i})^{\sigma}}(t,x_*(t),u^i_*(t),(\psi^i)^{\sigma}(t))\\
&=  Ax_*(t) +
B^1u^1_*(t)+ \cdots + B^i u^i_*(t) + \cdots +B_Nu_*^N (t)\\
&= H^i_{({\psi^i})^{\sigma}}(t,x_{*}(t),u^1_{*}(t),\ldots,
u^N_{*}(t), (\psi^i)^{\sigma}(t)) \, , \\[0.3cm]
({\psi^{i}})^\Delta(t) &=
- \mathcal{H}^i_{x}(t,x_*(t),u^i_*(t),(\psi^i)^{\sigma}(t))\\
&=  - H_{x}^i(t,x_{*}(t),u^1_{*}(t),\ldots, u^N_{*}(t),
(\psi^i)^{\sigma}(t)) \, ,
\end{split}
\end{equation*}
and $\mathcal{H}^i_{u^i}(t,x_*(t),u^i_*(t),(\psi^i)^{\sigma}(t))
= H^i_{u^i}(t,x_{*}(t),u^1_{*}(t),\ldots,
u^N_{*}(t), (\psi^i)^{\sigma}(t))= 0$.
\end{proof}

\begin{remark}
Notice that conditions \emph{1} and \emph{2} of
Theorem~\ref{Theo1} simply assert that $x_*$ is a solution of the
initial value problem (\ref{IVP}).
\end{remark}

\begin{theorem}[Necessary conditions for a linear MPS
Nash-equilibrium] Let $\Gamma_N$ be a $N$-player delta
differential game, where the cost functional of the i-th player is
given by (\ref{cost}). If the decision $N$-tuple
$(u_{*}^{1}(\cdot), \ldots,u_{*}^{N}(\cdot))$,
given by $(\gamma^1_*(\cdot,x_a,x_*(\cdot)),
\ldots, \gamma^N_*(\cdot,x_a,x_*(\cdot)))$, is a MPS
Nash-equilibrium of \ $\Gamma_N$ and if $x_{*}(\cdot)$ is the
associated trajectory of the state, then there exist
delta differentiable functions
$\psi^i:\mathbb{T}\rightarrow \mathbb{R}^n$
on $\mathbb{T}^k$, $i=1,2,\ldots, N$, such that for
$$
H^i(t,x,u^1, \ldots, u^N,(\psi^i)^{\sigma}) := L^i(t,x,u^1,
\ldots, u^N)
+ (\psi^i)^{\sigma} \cdot (Ax+B^1 u^1 + \cdots +B^N u^N)
$$
one has:
\begin{enumerate}
\item $x_{*}^\Delta(t)=
H^i_{({\psi^i})^{\sigma}}(t,x_{*}(t),u^1_{*}(t),\ldots,
u^N_{*}(t), (\psi^i)^{\sigma}(t))$;

\item $x_{*}(a)=x_a$;

\item $\displaystyle ({\psi^{i}})^\Delta(t)= -
H_{x}^i(t,x_{*}(t),u^1_{*}(t),\ldots, u^N_{*}(t),
(\psi^i)^{\sigma}(t)) $
$$ \qquad - \ \sum_{\stackrel{j=1}{j \neq i}}^N
\ H^i_{u_j}(t,x_{*}(t),u^1_{*}(t),\ldots, u^N_{*}(t),
(\psi^i)^{\sigma}(t))\cdot {\gamma^{j}_*}_x (t,x_a,x_{*}(t))$$
(${\gamma^{j}_*}_x$ denotes the partial derivative of
${\gamma^{j}_*}$ with respect to $x$);

\item $\psi^i(b)=0$;

\item $\displaystyle H^i_{u^i}(t,x_{*}(t),u^1_{*}(t),\ldots,
u^N_{*}(t), ({\psi^i})^{\sigma}(t))= 0$;
\end{enumerate}
for all $i=1,2, \ldots, N$ and $t \in \mathbb{T}^k$.
\end{theorem}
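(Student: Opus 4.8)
The plan is to mirror the proof of Theorem~\ref{Theo1}, reducing the $N$-player problem to a single-player Lagrange problem for each fixed $i$ and invoking Theorem~\ref{PMP-time-scales}; the essential new feature, responsible for the extra summation term in condition~3, is that under the MPS structure the controls of the remaining players depend on the state. First I would fix $i \in \{1,\ldots,N\}$ and consider a unilateral deviation of player $i$, the other players retaining their equilibrium strategies $\gamma^j_*(\cdot,x_a,x(\cdot))$, $j \neq i$. Substituting these into the dynamics and the cost produces a reduced Lagrange problem with
$$
\widetilde{f}^i(t,x,u^i) := Ax + \textstyle\sum_{j\neq i} B^j\, \gamma^j_*(t,x_a,x) + B^i u^i
$$
and reduced running cost $\widetilde{L}^i(t,x,u^i):=L^i(t,x,\gamma^1_*,\ldots,u^i,\ldots,\gamma^N_*)$, the $\gamma^j_*$ ($j\ne i$) being evaluated at $(t,x_a,x)$. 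Since $(u^1_*,\ldots,u^N_*)$ is a Nash-equilibrium, $(x_*(\cdot),u^i_*(\cdot))$ is a weak local minimizer of this reduced problem.

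Before applying Theorem~\ref{PMP-time-scales} I would verify its hypotheses for the reduced problem: the $C^1$ dependence on $x$ and $u^i$ follows once the equilibrium strategies $\gamma^j_*$ are assumed $C^1$ in $x$ (observe that $\widetilde{f}^i$ is generally only affine when the $\gamma^j_*$ are affine, so the reduced dynamics need not be linear, in contrast with the genuinely linear OL situation), while the existence of a unique forward solution of $x^\Delta=\widetilde{f}^i(t,x,u^i)$, $x(a)=x_a$, is subsumed in the admissibility built into $\Gamma_N$. With these in place I set
$$
\widetilde{\mathcal{H}}^i(t,x,u^i,(\psi^i)^\sigma) := H^i\bigl(t,x,\gamma^1_*,\ldots,u^i,\ldots,\gamma^N_*,(\psi^i)^\sigma\bigr),
$$
with the $\gamma^j_*$ ($j\ne i$) evaluated at $(t,x_a,x)$, and apply Theorem~\ref{PMP-time-scales} to obtain a delta-differentiable multiplier $\psi^i$ with $\psi^i(b)=0$ satisfying the state equation, the stationarity $\widetilde{\mathcal{H}}^i_{u^i}=0$, and the costate equation $(\psi^i)^\Delta=-\widetilde{\mathcal{H}}^i_x$.

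The crux is the chain-rule translation of these conditions back into the unreduced Hamiltonian $H^i$. Because the substituted strategies $\gamma^j_*$ do not depend on $u^i$, the stationarity condition collapses to $\widetilde{\mathcal{H}}^i_{u^i}=H^i_{u^i}=0$ (condition~5), and $\widetilde{\mathcal{H}}^i_{(\psi^i)^\sigma}=\widetilde{f}^i$ reproduces the state equation (conditions~1 and~2). For the costate equation, however, differentiation in $x$ must account for the $x$-dependence of each $\gamma^j_*$ with $j\neq i$, giving
$$
\widetilde{\mathcal{H}}^i_x = H^i_x + \sum_{j\neq i} H^i_{u^j}\cdot (\gamma^j_*)_x,
$$
the partial derivatives taken along the equilibrium; substituting into $(\psi^i)^\Delta=-\widetilde{\mathcal{H}}^i_x$ yields precisely condition~3. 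I expect the main obstacle to be exactly this chain-rule step — tracking which arguments are held fixed, ensuring the partials $(\gamma^j_*)_x$ are well defined, and confirming that no analogous correction contaminates the stationarity condition~5 — rather than any difficulty with the time-scale calculus itself, which is entirely delegated to Theorem~\ref{PMP-time-scales}.
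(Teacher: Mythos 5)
Your strategy coincides with the paper's: fix $i$, substitute the remaining players' equilibrium strategies into the dynamics and the cost, apply Theorem~\ref{PMP-time-scales} to the reduced single-player problem, and obtain condition~3 from the chain rule, since the substituted strategies depend on $x$. Your chain-rule computation, and your observation that the stationarity condition~5 picks up no correction term because the $\gamma^j_*$ ($j\neq i$) do not depend on $u^i$, are correct --- indeed spelled out more explicitly than in the paper.

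There is, however, a genuine gap at the sentence ``Since $(u^1_*,\ldots,u^N_*)$ is a Nash-equilibrium, $(x_*(\cdot),u^i_*(\cdot))$ is a weak local minimizer of this reduced problem.'' In the OL case (Theorem~\ref{Theo1}) this inference is immediate, because there the Nash condition quantifies over exactly the class of controls appearing in the reduced Lagrange problem. In the MPS case it is not: the Nash condition asserts minimality of $u^i_*$ only against deviations inside $\mathcal{U}_i$, i.e.\ against feedback controls of the form $\gamma^i(t,x_a,x(t))$, whereas Theorem~\ref{PMP-time-scales} concerns the Lagrange problem (\ref{Lagrange Problem}), in which both the competing controls and the minimizer itself are open-loop time functions in $C_{rd}$. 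So two different classes of deviations must be compared, and the equilibrium object (a strategy, not a time function) must be replaced by an open-loop realization. This bridge is precisely what the paper's proof spends most of its length on: it establishes the equality (\ref{min-controls=}) of the minima over admissible MPS and OL controls, by noting that every OL control can be viewed as an MPS control (giving one inequality), and that the open-loop control $v^i_*(t):=\gamma^i_*(t,x_a,x_*(t))$ generates, by uniqueness of forward solutions, the same trajectory $x_*$ and hence the same cost as the feedback equilibrium control (giving the other). Only then does one know that the \emph{open-loop} pair $(x_*(\cdot),v^i_*(\cdot))$ is a weak local minimizer of the reduced problem, which is what licenses invoking Theorem~\ref{PMP-time-scales}. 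Without this argument (or an equivalent one), your application of Theorem~\ref{PMP-time-scales} is not justified: its hypothesis has not been verified for any open-loop pair. (Note also that viewing OL controls as MPS controls tacitly assumes the strategy sets $\sigma_i$ contain the strategies that ignore $x$; the paper makes the same silent assumption.)
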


\begin{proof} Differently from Theorem~\ref{Theo1},
now the controls $u^i$ depend explicitly on the state variable
$x$. Suppose that the decision $N$-tuple $(u_{*}^{1},
\ldots,u_{*}^{N})$ is a MPS  Nash-equilibrium of \ $\Gamma_N$, and
$x_{*}$ is the associated state trajectory. Fix $i \in
\{1,2,\ldots, N\}$. The same reasoning as used in the proof of
Theorem~\ref{Theo1} permit to conclude that $(x_*(\cdot),
u_*^i(\cdot))$ is a weak local minimizer of problem
(\ref{problem}), where we suppose now that the controls are MPS.
In the following we will prove that
\begin{equation}
\label{min-controls=} \min_{all \  admissible \  MPS \
controls}\widetilde{J}^i(u^i) \ \ = \ \ \min_{all \ admissible\ OL
\ controls}\widetilde{J}^i(u^i) \, .
\end{equation}
An OL control can be considered as a MPS control, so it is
clear that
$$
\min_{all \  admissible \  MPS \ controls}\widetilde{J}^i(u^i) \ \
\leq \ \ \min_{all \ admissible\  OL \
controls}\widetilde{J}^i(u^i) \, .
$$
Since $u_{*}^{i}(t)=\gamma^{i}_*(t,x_a, x(t))$ is a MPS
Nash-equilibrium control, by the assumption of admissibility, the
equations
$x^{\Delta}(t)=\widetilde{f}(x(t),\gamma^{i}_*(t,x_a, x(t)))$
and $x(a) =x_a$
define a unique trajectory $x_*$. With this trajectory we define
now the OL control
$$
v^i_*(t):=\gamma^{i}_*(t,x_a, x_*(t)).
$$
Notice that
$$
\widetilde{J}^i(v_*^i)= \min_{all \  admissible \  MPS \
controls}\widetilde{J}^i(u^i) \, ,
$$
and hence equality (\ref{min-controls=}) holds.
Observe also that $(x_*(\cdot), v^i_*(\cdot))$ is a weak local
minimizer of problem (\ref{problem}). Applying
Theorem~\ref{PMP-time-scales} to $\mathcal{H}^i$, we conclude that
there exists a delta differentiable function
$\psi^i : \mathbb{T}\rightarrow \mathbb{R}^n$
on $\mathbb{T}^k$ such that
\begin{multline*}
\begin{split}
({\psi^{i}})^\Delta(t)  &=  -
\mathcal{H}^i_{x}(t,x_*(t),v^i_*(t),(\psi^i)^{\sigma}(t))\\
 &=  - H_{x}^i(t,x_{*}(t),u^1_{*}(t),\ldots, u^N_{*}(t),
(\psi^i)^{\sigma}(t))
\end{split} \\
 - {\sum_{\stackrel{j=1}{j \neq i}}^N} \
H^i_{u_j}(t,x_{*}(t),u^1_{*}(t),\ldots, u^N_{*}(t),
(\psi^i)^{\sigma}(t))\cdot {\gamma^{j}_*}_x (t,x_a,x_{*}(t)) \, .
\end{multline*}
The other conditions are obtained in a similar way.
\end{proof}


\section*{Acknowledgments}

This work was partially supported by the \emph{Portuguese
Foundation for Science and Technology} (FCT), through the R\&D
unit \emph{Center for Research in Optimization and Control} (CEOC)
of the University of Aveiro (http://ceoc.mat.ua.pt), cofinanced by
the European Community Fund FEDER/POCI 2010. The authors are
grateful to Gerhard Jank for all his availability during a
one-year stay at the University of Aveiro, and for having shared
with them his expertise and personal notes \cite{Jank} on
dynamic games.



\end{document}